\documentclass[a4paper]{amsproc}


\usepackage{amssymb}
\usepackage{pstricks}
\usepackage{pstcol}
\usepackage{graphicx}
\usepackage{amsmath}
\usepackage{amsfonts}
\usepackage{latexsym}


\theoremstyle{plain}

\theoremstyle{definition}

\numberwithin{equation}{section}


\setlength{\textwidth} {29cc} \setlength{\textheight}{42cc}


\title[Trigonometric beta integrals]{The integrals in Gradshteyn and Ryzhik. \\
Part 13: Trigonometric forms of the beta function}

\subjclass[2000]{Primary 33}

\keywords{Integrals, Beta function}

\author[V. Moll]{Victor H. Moll}
\address{Department of Mathematics,
Tulane University, New Orleans, LA 70118}
\email{vhm@math.tulane.edu}

\address{\hfill{\it Received 7 07
2009, revised ?? }\newline Departamento de Matem\'atica
\newline
Universidad T\'ecnica Federico Santa Mar\'{\i}a
\newline  Casilla 110-V,
\newline Valpara\'{\i}so, Chile}

\thanks{The author wishes to acknowledge the partial support of  
NSF-DMS 0713836.}

\begin{document}

{\begin{flushleft}\baselineskip9pt\scriptsize {\bf SCIENTIA}\newline
Series A: {\it Mathematical Sciences}, Vol. ?? (2010), ??
\newline Universidad T\'ecnica Federico Santa Mar{\'\i}a
\newline Valpara{\'\i}so, Chile
\newline ISSN 0716-8446
\newline {\copyright\space Universidad T\'ecnica Federico Santa
Mar{\'\i}a\space 2010}
\end{flushleft}}

\vspace{10mm} \setcounter{page}{1} \thispagestyle{empty}

\begin{abstract}
The table of Gradshteyn and Rhyzik contains some  
trigonometric integrals  that can be 
expressed in terms of the beta function. We describe the
evaluation of some of them.
\end{abstract}

\maketitle

\newcommand{\nn}{\nonumber}
\newcommand{\ba}{\begin{eqnarray}}
\newcommand{\ea}{\end{eqnarray}}
\newcommand{\ift}{\int_{0}^{\infty}}
\newcommand{\ione}{\int_{0}^{1}}
\newcommand{\ifft}{\int_{- \infty}^{\infty}}
\newcommand{\no}{\noindent}
\newcommand{\realpart}{\mathop{\rm Re}\nolimits}
\newcommand{\imagpart}{\mathop{\rm Im}\nolimits}

\newtheorem{Definition}{\bf Definition}[section]
\newtheorem{Thm}[Definition]{\bf Theorem} 
\newtheorem{Example}[Definition]{\bf Example} 
\newtheorem{Lem}[Definition]{\bf Lemma} 
\newtheorem{Note}[Definition]{\bf Note} 
\newtheorem{Cor}[Definition]{\bf Corollary} 
\newtheorem{Prop}[Definition]{\bf Proposition} 
\newtheorem{Problem}[Definition]{\bf Problem} 
\numberwithin{equation}{section}

\maketitle

\section{Introduction} \label{intro} 
\setcounter{equation}{0}

The table of integrals \cite{gr} contains a large variety of definite 
integrals in trigonometric form that can be evaluated in terms of the 
{\em beta function} defined by
\begin{equation}
B(a,b) = \int_{0}^{1} x^{a-1} (1-x)^{b-1} \, dx. 
\label{beta-def}
\end{equation}
\noindent
The convergence of the integral requires $a, \, b  > 0$. 

The change of variables $x= \sin^{2}t$ yields the basic representation
\begin{equation}
B(a,b) = 2 \int_{0}^{\pi/2} \sin^{2a-1}t \, \cos^{2b-1}t \, dt,
\end{equation}
\noindent
that, after replacing $(2a,2b)$ by $(a,b)$, is written as
\begin{equation}
\int_{0}^{\pi/2} \sin^{a-1}t \, \cos^{b-1}t \, dt = 
\frac{1}{2} B \left( \frac{a}{2}, \frac{b}{2} \right).
\label{36215}
\end{equation}
\noindent
This appears as $\mathbf{3.621.5}$ in \cite{gr}.

\section{Special cases} \label{sec-special} 
\setcounter{equation}{0}

In this section we present several special cases of formula (\ref{36215}) 
that appear in \cite{gr}. 

\begin{Example}
The choice $b=1$ in (\ref{36215}) gives
\begin{equation}
\int_{0}^{\pi/2} \sin^{a-1}t \, dt = \frac{1}{2} B \left( \frac{a}{2}, 
\frac{1}{2} \right).
\label{form-1}
\end{equation}
\noindent
Legendre's duplication formula 
\begin{equation}
\Gamma(2a) = \frac{2^{2a-1}}{\sqrt{\pi}} \Gamma(a) \Gamma( a + \tfrac{1}{2}) 
\label{legen-1}
\end{equation}
\noindent
can be used to write (\ref{form-1}) as
\begin{equation}
\int_{0}^{\pi/2} \sin^{a-1}t \, dt = 2^{a-2} B \left( \frac{a}{2},
\frac{a}{2} \right) = \frac{2^{a-2} \, \Gamma^{2}(a/2)}{\Gamma(a)}.
\label{form-2}
\end{equation}
\noindent
This is $\mathbf{3.621.1}$ in \cite{gr}. The dual evaluation
\begin{equation}
\int_{0}^{\pi/2} \cos^{a-1}t \, dt = 2^{a-2} B \left( \frac{a}{2}, 
\frac{a}{2} \right) = \frac{2^{a-2} \, \Gamma^{2}(a/2)}{\Gamma(a)}, 
\label{form-2cos}
\end{equation}
\noindent
comes from the change of variables $t \mapsto \frac{\pi}{2} - t$. 
The reader will find a proof of (\ref{legen-1}) in \cite{irrbook}. 
\end{Example}

\begin{Example}
The special case $a=\tfrac{1}{2}$ in (\ref{form-2}) gives $\mathbf{3.621.7}$:
\begin{equation}
\int_{0}^{\pi/2} \frac{dx}{\sqrt{\sin x }} = \frac{\Gamma^{2} \left( 
\tfrac{1}{4} \right)}{2 \sqrt{2 \pi}}.
\end{equation}
\end{Example}

\begin{Example}
The special case $a=\tfrac{3}{2}$ in (\ref{form-2}) gives $\mathbf{3.621.6}$:
\begin{equation}
\int_{0}^{\pi/2} \sqrt{\sin x } \, dx  = \sqrt{\frac{2}{\pi}} 
\Gamma^{2} \left( \tfrac{1}{4} \right). 
\end{equation}
\end{Example}

\begin{Example}
The special case $a=\tfrac{5}{2}$ in (\ref{form-2}) gives $\mathbf{3.621.2}$:
\begin{equation}
\int_{0}^{\pi/2} \sin^{3/2} x  \, dx  = \frac{1}{6 \sqrt{2 \pi}} 
\Gamma^{2} \left( \tfrac{1}{4} \right). 
\end{equation}
\end{Example}

\begin{Example}
The special case $a = 2m+1$ in (\ref{form-2}) gives
\begin{equation}
\int_{0}^{\pi/2} \sin^{2m}x \, dx = 2^{2m-1} B \left( m + \tfrac{1}{2}, m +
\tfrac{1}{2} \right), 
\end{equation}
\noindent
and using the identity
\begin{equation}
\Gamma \left( m + \tfrac{1}{2} \right) = \frac{\pi}{2^{2m}} \frac{(2m)!}{m!}
\end{equation}
\noindent
it yields
\begin{equation}
\int_{0}^{\pi/2} \sin^{2m}x \, dx = \frac{\binom{2m}{m} \, \pi}{2^{2m+1}}. 
\end{equation}
\noindent
This appears as $\mathbf{3.621.3}$. Similarly, $a= 2m+2$ in (\ref{form-2}) 
gives
\begin{equation}
\int_{0}^{\pi/2} \sin^{2m+1}x \, dx = 2^{2m}B(m+1,m+1),
\end{equation}
\noindent
that can be written as 
\begin{equation}
\int_{0}^{\pi/2} \sin^{2m+1}x \, dx = \frac{2^{2m}}{(2m+1)} \binom{2m}{m}^{-1}.
\end{equation}
\noindent
This is $\mathbf{3.621.4}$. 
\end{Example}

\begin{Example}
The integral $\mathbf{3.622.1}$ is 
\begin{eqnarray}
\int_{0}^{\pi/2} \tan^{\pm a}x \, dx & = & 
\int_{0}^{\pi/2} \sin^{\pm a}x \, \cos^{\mp a} x \, dx  \nonumber \\
 & = & \tfrac{1}{2} B \left( \tfrac{1 \pm a}{2}, 1 - \tfrac{1 \pm a}{2} \right) 
\nonumber \\
 & = & \tfrac{1}{2} \Gamma \left( \tfrac{1 \pm a}{2} \right) 
      \Gamma \left( 1- \tfrac{1 \pm a}{2} \right) 
\nonumber 
\end{eqnarray}
\noindent
and this reduces to 
\begin{equation}
\int_{0}^{\pi/2} \tan^{\pm a}x \, dx = \frac{\pi}{2 \, \cos( \pi a/2)},
\nonumber
\end{equation}
\noindent
as it appears in the table. 
\end{Example}

\begin{Example}
The identity
\begin{equation}
\tan^{a-1}x \, \cos^{2b-2}x = \sin^{a-1}x \, \cos^{2b-a-1}x 
\end{equation}
\noindent
shows that
\begin{equation}
\int_{0}^{\pi/2} \tan^{a-1}x \, \cos^{2b-2}x \, dx = 
\int_{0}^{\pi/2} \sin^{a-1}x \, \cos^{2b-a- 1}x \, dx = 
\frac{1}{2} B \left( \frac{a}{2}, b - \frac{a}{2} \right).
\end{equation}
\noindent
This appears as $\mathbf{3.623.1}$.
\end{Example}

\begin{Example}
The formula $\mathbf{3.624.2}$ states that
\begin{equation}
\int_{0}^{\pi/2} \frac{\sin^{a-1/2}x}{\cos^{2a-1}x} \, dx = 
\frac{\Gamma \left( \tfrac{a}{2} + \tfrac{1}{4} \right) \Gamma(1-a)}
{2 \Gamma \left( \tfrac{5}{4} - \tfrac{a}{2} \right) }. 
\end{equation}
\noindent
This comes directly from (\ref{36215}).
\end{Example}

\begin{Example}
The identity $\mathbf{3.627}$:
\begin{equation}
\int_{0}^{\pi/2} \frac{\tan^{a}x}{\cos^{a}x} \, dx = 
\int_{0}^{\pi/2} \frac{\cot^{a}x}{\sin^{a}x} \, dx = 
\frac{\Gamma(a) \Gamma( \tfrac{1}{2}-a)}{2^{a} \sqrt{\pi}} \sin \left( 
\frac{\pi a}{2} \right),
\label{3627}
\end{equation}
\noindent
can be verified by writing the first integral as
\begin{equation}
I = \int_{0}^{\pi/2} \sin^{a}x \, \cos^{1-2a}x \, dx = 
\frac{1}{2} B \left( \frac{a+1}{2}, \frac{1-2a}{2} \right). 
\end{equation}
\noindent
The beta function is 
\begin{equation}
\frac{1}{2} B \left( \frac{a+1}{2}, \frac{1-2a}{2} \right) = 
\frac{\Gamma \left( \tfrac{a}{2} + \tfrac{1}{2} \right) 
\Gamma \left( \tfrac{1}{2} - a \right) }
{2 \Gamma \left( 1 - \tfrac{a}{2} \right) }. 
\label{exp-1}
\end{equation}
\noindent
Using $\Gamma(t) \Gamma(1-t) = \frac{\pi}{\sin \pi t}$ we can reduce 
(\ref{exp-1}) to the expression in (\ref{3627}). 
\end{Example}

\begin{Example}
The evaluation of $\mathbf{3.628}$ 
\begin{equation}
\int_{0}^{\pi/2} \text{sec}^{2p}x \, \sin^{2p-1}x \, dx = 
\frac{\Gamma(p) \Gamma( \tfrac{1}{2} - p) }{ 2 \sqrt{\pi}},
\end{equation}
\noindent
is direct, once we write the integral as
\begin{equation}
\int_{0}^{\pi/2} \cos^{-2p}x \sin^{2p-1}x \, dx = 
\frac{1}{2} B \left( \tfrac{1}{2} - p , p \right). 
\end{equation}
\end{Example}

\section{A family of trigonometric integrals} \label{sec-trigo} 
\setcounter{equation}{0}

In this section we present the evaluation of a family of trigonometrical 
integrals in \cite{gr}. Many special cases appear in the table.

\begin{Prop}
Let $a, \, b, \, c \in \mathbb{R}$ with the condition 
\begin{equation}
a+b+2c+2 = 0. 
\label{res-1}
\end{equation}
\noindent
Then 
\begin{equation}
\int_{0}^{\pi/4} \sin^{a}x \, \cos^{b} x \, \cos^{c}(2x) \, dx = 
\frac{1}{2} B \left( \frac{a+1}{2}, c+1 \right).
\label{main-1}
\end{equation}
\end{Prop}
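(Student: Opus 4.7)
The plan is to reduce the integral on $[0,\pi/4]$ to the standard beta integral on $[0,1]$ by exploiting the constraint (\ref{res-1}) together with the factorization $\cos(2x)=\cos^{2}x\,(1-\tan^{2}x)$. The key observation is that the constraint makes the resulting integrand a perfect match for the substitution $u=\tan^{2}x$.

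First, I would use
\begin{equation}
\cos(2x) = \cos^{2}x \,(1 - \tan^{2}x) \nonumber
\end{equation}
to write $\cos^{c}(2x) = \cos^{2c}x\,(1-\tan^{2}x)^{c}$, so that the integrand becomes
\begin{equation}
\sin^{a}x \,\cos^{b+2c}x \,(1-\tan^{2}x)^{c}. \nonumber
\end{equation}
At this point the constraint (\ref{res-1}) enters decisively: since $b+2c = -a-2$, the factor $\sin^{a}x \cos^{b+2c}x$ collapses to $\sin^{a}x \cos^{-a-2}x = \tan^{a}x \sec^{2}x$. Thus the integral equals
\begin{equation}
\int_{0}^{\pi/4} \tan^{a}x \,\sec^{2}x \,(1-\tan^{2}x)^{c}\, dx. \nonumber
\end{equation}

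Next I would apply the substitution $u = \tan^{2}x$, so that $du = 2\tan x \sec^{2}x\, dx$ and $\tan^{a}x \sec^{2}x\, dx = \tfrac{1}{2} u^{(a-1)/2}\, du$. The endpoints $x=0$ and $x=\pi/4$ map to $u=0$ and $u=1$, respectively, giving
\begin{equation}
\frac{1}{2} \int_{0}^{1} u^{(a-1)/2} (1-u)^{c}\, du = \frac{1}{2} B\left( \frac{a+1}{2}, c+1 \right), \nonumber
\end{equation}
which is precisely (\ref{main-1}) by the definition (\ref{beta-def}).

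There is no real obstacle here; the only thing to watch is the bookkeeping in the substitution step (pairing $\tan x\sec^{2}x$ with $du/2$ and leaving behind $\tan^{a-1}x = u^{(a-1)/2}$), and implicitly the convergence conditions $a > -1$ and $c > -1$ required for the beta integral, which must be assumed in addition to (\ref{res-1}) for the formula to hold.
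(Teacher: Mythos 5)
Your proof is correct and is essentially the same as the paper's: the paper substitutes $t=\tan x$ (obtaining the factor $(1+t^2)^{-(a+b+2c+2)/2}$, which the constraint kills) and then sets $s=t^2$, while you apply the constraint first and perform the composite substitution $u=\tan^2 x$ in one step. The remark about needing $a>-1$ and $c>-1$ for convergence is a sensible addition, but the argument is the same reduction to the beta integral.
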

\begin{proof}
Let $t = \tan x$ to obtain 
\begin{equation}
\int_{0}^{\pi/4} \sin^{a}x \, \cos^{b} x \, \cos^{c}(2x) \, dx = 
\int_{0}^{1} t^{a} (1-t^{2})^{c} (1+t^{2})^{-(a+b+2c+2)/2} \, dt 
\end{equation}
\noindent
and (\ref{res-1}) yields
\begin{equation}
\int_{0}^{\pi/4} \sin^{a}x \, \cos^{b} x \, \cos^{c}(2x) \, dx = 
\int_{0}^{1} t^{a} (1-t^{2})^{c} \, dt.
\end{equation}
\noindent
The change of variables $s=t^{2}$ produces 
\begin{equation}
\int_{0}^{\pi/4} \sin^{a}x \, \cos^{b} x \, \cos^{c}(2x) \, dx = 
\frac{1}{2} \int_{0}^{1} s^{(a-1)/2} (1-s)^{c} \, ds,
\end{equation}
\noindent
and this last integral has the given beta value.
\end{proof}

\begin{Example}
The formula (\ref{main-1}), with $a= 2n, \, b=-2p-2n-2$ and $c=p$ appears as
$\mathbf{3.625.2}$ in \cite{gr}:
\begin{equation}
\int_{0}^{\pi/4} \frac{\sin^{2n}x \, \cos^{p}(2x)}{\cos^{2p+2n+2}x} \, dx = 
\tfrac{1}{2} B \left( n + \tfrac{1}{2}, p+1 \right).
\end{equation}
\end{Example}

\begin{Example}
The formula $\mathbf{3.624.3}$ 
\begin{equation}
\int_{0}^{\pi/4} \frac{\cos^{n-1/2}(2x)}{\cos^{2n+1}x} \, dx = \frac{\pi}
{2^{2n+1}} \binom{2n}{n}
\end{equation}
\noindent
corresponds to the case $a=0, \, b=-2n-1$ and $c=n- \tfrac{1}{2}$.
\end{Example}

\begin{Example}
Formula $\mathbf{3.624.4}$ in \cite{gr} 
\begin{equation}
\int_{0}^{\pi/4} \frac{\cos^{\mu}(2x)}{\cos^{2(\mu+1)}x} \, dx = 2^{2 \mu} 
B( \mu+1,\mu+1)
\label{36244}
\end{equation}
\noindent
corresponds to $a= 0, \, b=-2 \mu-2$ and $c = \mu$. Then (\ref{main-1}) 
gives
\begin{equation}
\int_{0}^{\pi/4} \frac{\cos^{\mu}(2x)}{\cos^{2(\mu+1)}x} \, dx = 
\frac{1}{2} B \left( \frac{1}{2}, \mu+1 \right).
\label{ans-1}
\end{equation}
\noindent
The duplication formula 
\begin{equation}
\Gamma(2x) = \frac{2^{2x-1}}{\sqrt{\pi}} \Gamma(x) \Gamma( x + \tfrac{1}{2} ),
\label{dupli-1}
\end{equation}
\noindent
transforms (\ref{ans-1}) into (\ref{36244}).
\end{Example}

\begin{Example}
The values $a= 2 \mu -2, \, b=0$ and $c=\mu$ produce $\mathbf{3.624.5}$:
\begin{equation}
\int_{0}^{\pi/4} \frac{\sin^{2 \mu -2}x}{\cos^{\mu}(2x)} \, dx = 
\frac{\Gamma( \mu - \tfrac{1}{2}) \Gamma(1- \mu)}{2 \sqrt{\pi}}
\end{equation}
\noindent
directly. Indeed, the answer from (\ref{main-1}) is  $B(\mu-1/2,1-\mu)/2$. The 
table also has the alternative answer $2^{1-2 \mu} B(2 \mu-1, 1-\mu)$ that 
can be obtained using (\ref{dupli-1}). 
\end{Example}

\begin{Example}
Formula $\mathbf{3.625.1}$:
\begin{equation}
\int_{0}^{\pi/4} \frac{\sin^{2n-1}x \, \cos^{p}(2x) }{\cos^{2p+2n+2}x} \, dx =
\frac{1}{2} B (n, p+1)
\end{equation}
\noindent
corresponds to $a=2n-1, \, b=-2p-2n-1$ and $c=p$.
\end{Example}

\begin{Example}
The choice $a=2n-1, \, b = -2n-2m$ and $c=m- \tfrac{1}{2}$ gives 
$\mathbf{3.625.3}$:
\begin{equation}
\int_{0}^{\pi/4} \frac{\sin^{2n-1}x \, \cos^{m-1/2}(2x) }{\cos^{2n+2m}x} \, 
dx = \frac{1}{2} B( n, m+ \tfrac{1}{2}). 
\end{equation}
\noindent
For $n, \, m \in \mathbb{N}$ we can also write
\begin{equation}
\int_{0}^{\pi/4} \frac{\sin^{2n-1}x \, \cos^{m-1/2}(2x) }{\cos^{2n+2m}x} \, 
dx = \frac{2^{2n-1}}{n} \binom{2m}{m} \binom{2n+2m}{n+m}^{-1} 
\binom{n+m}{n}^{-1}.
\end{equation}
\end{Example}

\begin{Example}
The values $a=2n, \, b = -2n-2m-1$ and $c = m - \tfrac{1}{2}$ give 
$\mathbf{3.625.4}$:
\begin{equation}
\int_{0}^{\pi/4} \frac{\sin^{2n}x \, \cos^{m - 1/2}(2x) }{\cos^{2n+2m+1}x} 
\, dx = \frac{1}{2} B \left( n + \tfrac{1}{2}, m + \tfrac{1}{2} \right). 
\end{equation}
\noindent
For $n, \, m \in \mathbb{N}$ we can also write
\begin{equation}
\int_{0}^{\pi/4} \frac{\sin^{2n}x \, \cos^{m - 1/2}(2x) }{\cos^{2n+2m+1}x} 
 \, dx = \frac{\pi}{2^{2n+2m+1}} \binom{2n}{n} \binom{2m}{m} 
\binom{n+m}{n}^{-1}.
\end{equation}
\end{Example}

\begin{Example}
Formula  $\mathbf{3.626.1}$:
\begin{equation}
\int_{0}^{\pi/4} \frac{\sin^{2n-1}x}{\cos^{2n+2}x} \sqrt{\cos(2x)} \, dx = 
\frac{1}{2} B( n, 3/2),
\end{equation}
\noindent
comes from (\ref{main-1}) with $a=2n-1, \, b = -2n-2$ and $c=1/2$. For 
$n \in \mathbb{N}$ we have
\begin{equation}
\int_{0}^{\pi/4} \frac{\sin^{2n-1}x}{\cos^{2n+2}x} \sqrt{\cos(2x)} \, dx = 
\frac{2^{2n} (n-1)! \, n!}{(2n+1)!}.
\end{equation}
\end{Example}

\begin{Example}
The last example in this section is formula  $\mathbf{3.626.2}$:
\begin{equation}
\int_{0}^{\pi/4} \frac{\sin^{2n}x}{\cos^{2n+3}x} \sqrt{\cos(2x)} \, dx = 
\frac{1}{2} B( n + \tfrac{1}{2}, \tfrac{3}{2}),
\end{equation}
\noindent
comes from (\ref{main-1}) with $a=2n, \, b = -2n-3$ and $c=1/2$. For 
$n \in \mathbb{N}$ we have
\begin{equation}
\int_{0}^{\pi/4} \frac{\sin^{2n}x}{\cos^{2n+3}x} \sqrt{\cos(2x)} \, dx = 
\frac{\pi}{2^{2n+2}} \frac{(2n)!}{n! \, (n+1)!}. 
\end{equation}
\end{Example}

\bibliography{../../../AllRef/biblio2}
\bibliographystyle{plain}
\end{document}